\newcommand{\A}{\mathbb{A}}
\newcommand{\C}{\mathbb{C}}
\newcommand{\Z}{\mathbb{Z}}
\newcommand{\rank}{\operatorname{rank}}
\newcommand{\trdeg}{{\rm tr.deg}}
\newtheorem{thm}{Theorem}[section]
\newtheorem{prop}[thm]{Proposition}
\newtheorem{lem}[thm]{Lemma}
\newtheorem{cor}[thm]{Corollary}
\newtheorem{question}[thm]{Question}
\begin{document}
\title[Retracts of Laurent polynomial rings]{Retracts of Laurent polynomial rings}
\author{Neena Gupta and  Takanori Nagamine}
\address[Neena Gupta]
{Theoretical Statistics and Mathematics Unit, Indian Statistical Institute, 
203 B.T.Road, Kolkata-700108, India}
\email{neenag@isical.ac.in, rnanina@gmail.com}
\address[Takanori Nagamine]
{National Institute of Technology (KOSEN), Oyama College, 
771, Nakakuki, Oyama city, Tochigi 323-0806, Japan}
\email{t.nagamine14@oyama-ct.ac.jp}
\date{\today}
\subjclass[2020]{Primary: 13B25, Secondary: 14A05, 14M20}
\keywords{retract, cancellation problem, Laurent polynomial ring, polynomial ring, retract rational field}
\thanks{The work of the second author was supported by JSPS KAKENHI Grant Number JP21K13782.}

%%%%%%%%%%%%%%%%%%%%%%%%%%%%%%%%%%%%%%%%%
%%%%%%%%%%%%%%%%%%%%%%%%%%%%%%%%%%%%%%%%%
%Abstract
%%%%%%%%%%%%%%%%%%%%%%%%%%%%%%%%%%%%%%%%%
%%%%%%%%%%%%%%%%%%%%%%%%%%%%%%%%%%%%%%%%%
\begin{abstract}

Let $R$ be an integral domain and $B=R[x_1,\ldots,x_n]$ be the polynomial ring. In this paper, we consider retracts of $B[1/M]$ for a monomial $M$. We show that (1) if $M=\prod_{i=1}^nx_i$, then every retract is a Laurent polynomial ring over $R$, (2) if $R$ is a UFD and $n\leq 3$, then every retract is isomorphic to $R[y_1^{\pm1},\ldots,y_s^{\pm1},z_1,\ldots,z_t]$ for some $s,t\geq 0$. 
\end{abstract}

\maketitle

\setcounter{section}{0}

%%%%%%%%%%%%%%%%%%%%%%%%%%%%%%%%%%%%%%%%%
%%%%%%%%%%%%%%%%%%%%%%%%%%%%%%%%%%%%%%%%%
%Section 1
%%%%%%%%%%%%%%%%%%%%%%%%%%%%%%%%%%%%%%%%%
%%%%%%%%%%%%%%%%%%%%%%%%%%%%%%%%%%%%%%%%%
\section{Introduction}

Throughout the paper, all rings are commutative with unity and any domain is understood to be an integral domain. 
Let $R$ be a domain and $A\subset B$ be $R$-algebras. $R^*$ is the group of units of $R$. $Q(R)$ is the quotient field of $R$. 
For an integer $n\geq 0$, $R^{[n]}$ (resp. $R^{[\pm n]}$) denotes the polynomial ring (resp. Laurent polynomial ring) in $n$ variables over $R$. For  $f\in R$, $R_f=S^{-1}R$ where $S=\{1,f,f^2,\ldots\}$. For a field $K$, $K^{(n)}$ denotes the field of fractions of the polynomial ring $K^{[n]}$. 
$A$ is called an {\bf $R$-retract} of $B$ if there is an $R$-algebra homomorphism $\varphi:B\to A$ such that $\varphi|_A={\rm id}_A$. 
In particular, $\varphi$ is called an {\bf $R$-retraction}. When $A$ is a domain, $\trdeg_R\:A$ denotes the transcendence degree of $Q(A)$ over $Q(R)$.

Let $k$ be a field and $A$ be a finitely generated $k$-domain of transcendence degree $d$ over $k$. We consider the following four conditions on $A$.  
\begin{enumerate} 
\item[{\bf (a)}] 
$A\cong_kk^{[d]}$, 
\item[{\bf (b')}]
$A^{[1]}\cong_kk^{[d+1]}$, 
\item[{\bf (b)}]
$A^{[m]}\cong_kk^{[d+m]}$ for some $m\geq 1$, 
\item[{\bf (c)}]
$A$ is a $k$-retract of $k^{[d+m]}$ for some $m\geq 1$. 
\end{enumerate}
It is clear that (a)$\implies$(b')$\implies$(b)$\implies$(c). 
Let $n=d+m$ for some $m\geq 1$. In \cite{Cos77}, Costa asked the following question: \emph{Is every retract of $k^{[n]}$ a polynomial ring over $k$?, i.e., does the implication {\rm(c)}$\implies${\rm(a)} hold?} He proved that the question is affirmative when $n\leq 2$ (\cite[Theorem 3.5]{Cos77}). The second author showed that this is affirmative when $n=3$ and the characteristic of $k$ is zero (\cite[Theorem 2.5]{Nag19}). Note that the first author and  Chakraborty, Dasgupta and Dutta showed the same result in \cite[Theorem 5.8]{CDDG21}. However, when $n=3$ and the characteristic of $k$ is positive, the question is still open.

The famous Zariski Cancellation Problem (ZCP) asks: \emph{Does the implication {\rm (b')}$\implies${\rm (a)} hold?} Costa's question is a generalization of ZCP.
We know that ZCP has affirmative answer when $d\leq 2$ (Abhyankar, Heinzer and Eakin \cite{AHE72}, Fujita \cite{Fuj79}, Miyanishi and Sugie \cite{MS80}, Russell \cite{Rus81}, the first author and Bhatwadekar \cite{BG15}, Kojima \cite{Koj16}). 
However, if $d\geq3$ and the characteristic of $k$ is positive, the first author gave counterexamples in \cite{Gup14A} and \cite{Gup14B}. In particular, these counterexamples show that when $d\geq3$, for any $m\geq 1$, retracts of $k^{[d+m]}$ need not be polynomial rings.

In this article, we consider similar problems for retracts of Laurent polynomial ring $k^{[\pm d]}$.
% another natural problems for retracts in which $k^{[d]}$ is replaced by the Laurent polynomial ring $k^{[\pm d]}$. 
This is an analogue of Costa's question and ZCP. More generally, we consider the following question. 

\begin{question}
{\rm 
Let $M\in k[x_1,\ldots,x_n]\cong_kk^{[n]}$ be a monomial. What are the retracts of $k[x_1,\ldots,x_n]_M$? 
}
\end{question}

Similarly, we consider the following three conditions on $A$.
\begin{enumerate} 
\item[{\bf (a*)}] 
$A\cong_kk^{[\pm d]}$, 
\item[{\bf (b*)}]
$A^{[\pm m]}\cong_kk^{[\pm(d+m)]}$ for some $m\geq 1$, 
\item[{\bf (c*)}]
$A$ is a $k$-retract of $k^{[\pm(d+m)]}$ for some $m\geq 1$. 
\end{enumerate}
It is clear that (a*)$\implies$(b*)$\implies$(c*) hold. The first author and Bhatwadekar \cite[Lemma 4.5]{BG12} proved that the implication {\rm (b*)}$\implies${\rm (a*)} holds, that is, the Zariski Cancellation Problem for a Laurent polynomial ring is affirmative. When $k=\C$, the same result was observed by Dubouloz in \cite{Dub16}. Freudenburg also discussed Laurent cancellation problems in \cite{Fre14}. 

In this paper, we show that the implication  (c*)$\implies$(a*) holds. More precisely, we show the following, which is the main result of this paper. 
%Main Theorem%
\begin{thm} \label{main}
Let $R$ be an integral domain. Then every $R$-retract of $R^{[\pm n]}$ is a Laurent polynomial ring over $R$ for any $n\geq 0$.  
\end{thm}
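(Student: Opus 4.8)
The plan is to exploit the rigidity of the unit group of a Laurent polynomial ring over a domain: this essentially forces any retract to be a ``monomial'' subalgebra, and such a subalgebra is visibly the group algebra of a finitely generated free abelian group, hence a Laurent polynomial ring.

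\textbf{Step 1 (Reduction to a monomial subalgebra).} Write $B=R^{[\pm n]}=R[x_1^{\pm 1},\ldots,x_n^{\pm 1}]$ and let $\varphi\colon B\to A$ be an $R$-retraction, so that $A\subseteq B$, $\varphi(B)\subseteq A$ and $\varphi|_A=\mathrm{id}_A$. Then for every $a\in A$ we have $a=\varphi(a)\in\varphi(B)$, whence $A=\varphi(B)=R[\varphi(x_1)^{\pm 1},\ldots,\varphi(x_n)^{\pm 1}]$. Put $a_i=\varphi(x_i)$; since each $x_i\in B^*$ and $\varphi$ is a ring homomorphism whose image lies in $A\subseteq B$, each $a_i$ is a unit of $B$.

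\textbf{Step 2 (Units of $B$).} Next I would record the standard fact that, because $R$ is a domain, every unit of $B$ has the form $u\,x_1^{c_1}\cdots x_n^{c_n}$ with $u\in R^*$ and $(c_1,\ldots,c_n)\in\Z^n$. This is proved by regarding $B=\bigoplus_{\alpha\in\Z^n}Rx^\alpha$ as a $\Z^n$-graded domain: ordering $\Z^n$ lexicographically, the largest (resp. smallest) element of $\Supp(fg)$ is the sum of the largest (resp. smallest) elements of $\Supp f$ and $\Supp g$ since leading coefficients multiply to a nonzero element of $R$; applying this to $fg=1$ forces $\Supp f$ and $\Supp g$ to be singletons, and the leading coefficients are then units of $R$. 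Consequently $a_i=u_ix^{\alpha_i}$ for suitable $u_i\in R^*$ and $\alpha_i\in\Z^n$, and since $R\subseteq A$ we may absorb the $u_i$ to obtain
\[
A=R[x^{\alpha_1},x^{-\alpha_1},\ldots,x^{\alpha_n},x^{-\alpha_n}].
\]

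\textbf{Step 3 (Identification with a group algebra and conclusion).} Let $L\subseteq\Z^n$ be the subgroup generated by $\alpha_1,\ldots,\alpha_n$. Expanding products of the generators above shows that $A$ is the $R$-span of $\{x^\beta : \beta\in L\}$; these monomials are $R$-linearly independent in the domain $B$ and satisfy $x^\beta x^\gamma=x^{\beta+\gamma}$, so $A=\bigoplus_{\beta\in L}Rx^\beta$ is precisely the group algebra $R[L]$. As $L$ is a subgroup of the free abelian group $\Z^n$, it is free of some rank $s$ with $0\le s\le n$; choosing a $\Z$-basis $\beta_1,\ldots,\beta_s$ of $L$, the assignment $t_j\mapsto x^{\beta_j}$ yields an $R$-algebra isomorphism $R^{[\pm s]}=R[t_1^{\pm 1},\ldots,t_s^{\pm 1}]\xrightarrow{\ \sim\ }R[L]=A$. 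Hence $A\cong_R R^{[\pm s]}$, which also covers the trivial case $n=0$ (with $s=0$).

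\textbf{On the difficulty.} There is no genuine obstacle here; the only delicate point is the units lemma in Step 2, which really uses that $R$ is reduced (automatic for a domain, and false without some such hypothesis), together with the routine but necessary check in Step 3 that the monomial subalgebra is literally the group ring $R[L]$ rather than merely a homomorphic image of it.
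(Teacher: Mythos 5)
Your proof is correct, and it takes a genuinely different route from the paper's. The paper runs the pure Laurent case through its general machinery for $R[x_1^{\pm1},\ldots,x_d^{\pm1},x_{d+1},\ldots,x_n]$: a retraction induces a $\Z$-module retraction of $U_R(B)=B^*/R^*\cong\Z^n$ onto $U_R(A)$, one chooses a basis of $\Z^n$ adapted to the resulting splitting to produce new Laurent coordinates $y_i$ with $\varphi(y_i)=y_i$ or $1$, and then concludes by a transcendence-degree count together with Costa's lemma that a retract is algebraically closed in the ambient domain. You bypass all of that: from $A=\varphi(B)$ and the fact that the generators $\varphi(x_i)$ are units, the units lemma for a $\Z^n$-graded domain shows $A$ is the $R$-span of the monomials $x^\beta$ with $\beta$ in the subgroup $L\leq\Z^n$ generated by the exponents, i.e.\ the group algebra $R[L]$, and freeness of subgroups of $\Z^n$ finishes the argument. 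Your version is shorter, needs neither the idempotency of $\varphi$ beyond $A=\varphi(B)$ nor the direct-summand property of $U_R(A)$, and in fact proves slightly more: the image of \emph{any} $R$-algebra endomorphism of $R^{[\pm n]}$ (indeed any $R$-subalgebra generated by units of $B$) is a Laurent polynomial ring. What the paper's heavier setup buys is applicability to the mixed case $d<n$, where $\varphi(x_j)$ for $j>d$ need not be a unit and your monomial-subalgebra argument no longer applies; that generality is what the authors need for their Corollary 3.6 and Theorem 1.5. Your care on the two delicate points (the units lemma genuinely uses that $R$ is a domain, and $A$ is literally $\bigoplus_{\beta\in L}Rx^\beta$ by linear independence of monomials in $B$) is exactly right.
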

Therefore, conditions (a*), (b*) and (c*) are equivalent to each other. Moreover, in Corollary \ref{ret of lowvar} (c), we give a classification of the retracts of $R[x_1,x_2,x_3]_M\cong_R R^{[\pm d]}\otimes_R R^{[3-d]}$ for a UFD $R$ and $1\leq d\leq3$. Although Costa's original question is open for $k[x_1,x_2,x_3]$ in positive characteristic, we get a classification for $R[x_1,x_2,x_3]_M$ in any characteristic.  

Let $K/k$ be a field extension. $K$ is called {\bf rational} over $k$ if $K\cong_kk^{(n)}$ for some $n\geq0$. $K$ is called {\bf stably rational} over $k$ if $K^{(m)}$ is rational for some $m\geq0$. $K$ is called {\bf retract rational} over $k$ if there exist a $k$-domain $A$ such that $Q(A)=K$ and $A$ is a $k$-retract of $k[x_1,\ldots,x_n]_f$ for some $n\geq 0$ and $f\in k[x_1,\ldots,x_n]\cong_kk^{[n]}$. It is well known that if $k$ is an infinite field, then ``rational" $\implies$ ``stably rational" $\implies$ ``retract rational" (see e.g., \cite[Proposition 3.6 (a)]{Sal84}).  
The following question is an analogue of \cite[Question 4]{CDDG21}. 
\begin{question} \label{rationality}
{\rm
Let $M\in k[x_1,\ldots,x_n]\cong_kk^{[n]}$ be a monomial and $A$ be a $k$-retract of $k[x_1,\ldots,x_n]_M$. Does it follow that $Q(A)$ is rational over $k$?
}
\end{question}

Note that, if $A$ is a $k$-retract of $k[x_1,\ldots,x_n]_M$, then $Q(A)$ is retract rational over $k$. We give a partial answer of Question \ref{rationality} as below. 

\begin{thm} \label{rational}
Let $k$ be a field, $M\in k[x_1,\ldots,x_n]\cong_kk^{[n]}$ be a monomial and $A$ be a $k$-retract of $B:=k[x_1,\ldots,x_n]_M$. If one of the following holds, then $Q(A)$ is rational over $k$.
\begin{enumerate}
\item[{\bf (a)}]
$\trdeg_k\:A\in\{0,1,n\}$.
\item[{\bf (b)}]
$\rank(B^*/k^*)\geq n-2$. 
\item[{\bf (c)}]
$n\leq3$. 
\end{enumerate}
\end{thm}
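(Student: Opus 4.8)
The plan is to start with characteristic-free reductions common to all three parts. Write $M=\prod_{j\in S}x_j^{e_j}$ and put $d:=\#S$. Since $\prod_{j\in S}x_j$ divides $M$ and $M$ divides a power of $\prod_{j\in S}x_j$, inverting $M$ is the same as inverting each $x_j$, $j\in S$, so $B:=k[x_1,\dots,x_n]_M\cong_k k^{[\pm d]}\otimes_k k^{[n-d]}$; in particular $B$ is geometrically integral, $\rank(B^*/k^*)=d$, and $B$ has a $k$-rational point (send every $x_j$ to $1$, so $M\mapsto1\ne0$). Let $\varphi\colon B\to A$ be a retraction with inclusion $i\colon A\hookrightarrow B$. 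Then $A=\varphi(B)$ is a finitely generated $k$-domain; since $A\otimes_k\overline{k}\hookrightarrow B\otimes_k\overline{k}=\overline{k}[x_1,\dots,x_n]_M$ is again a domain, $A$ is geometrically integral; and composing $i$ with a $k$-point of $B$ produces a $k$-rational point $p_0$ of $\Spec A$.

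The main structural input is that $\Spec A$ is smooth over $k$. Applying the module-of-differentials functor to $A\xrightarrow{i}B\xrightarrow{\varphi}A$ gives $A$-linear maps $\Omega_{A/k}\to\Omega_{B/k}\otimes_BA\to\Omega_{A/k}$ whose composite is the identity, so $\Omega_{A/k}$ is a direct summand of the free module $\Omega_{B/k}\otimes_BA\cong A^n$, hence finitely generated projective, of constant rank $r$ as $\Spec A$ is connected. A geometrically reduced $k$-algebra of finite type is smooth over $k$ on a dense open set, where $\Omega_{A/k}$ has rank $\dim A=:t=\trdeg_kA$; thus $r=t$, and then $A$ being geometrically reduced with $\Omega_{A/k}$ locally free of rank $\dim A$ forces $\Spec A$ to be smooth over $k$.

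Part (a) then follows. If $t=0$, $A$ is a finite field extension of $k$ inside $B$, hence inside the algebraic closure of $k$ in $k(x_1,\dots,x_n)$, which is $k$; so $A=k$ and $Q(A)=k^{(0)}$. If $t=n$, then $\ker\varphi$ is a prime of the $n$-dimensional affine domain $B$ with $\dim(B/\ker\varphi)=n$, so $\ker\varphi=0$, $\varphi$ is an isomorphism, and $Q(A)\cong Q(B)=k^{(n)}$. If $t=1$, then $\Spec A$ is a smooth affine curve, geometrically integral, carrying the smooth $k$-point $p_0$, and with $Q(A)$ a transcendence-degree-one subfield of the purely transcendental extension $k(x_1,\dots,x_n)$; applying L\"uroth's theorem over $\overline{k}$, the smooth projective model $C$ of $Q(A)$ satisfies $C_{\overline{k}}\cong\BP^1_{\overline{k}}$, so $C$ has genus $0$, and since $p_0$ extends to a $k$-rational point of $C$ and a genus-$0$ curve with a rational point is $\BP^1_k$, we get $Q(A)=k(C)=k^{(1)}$.

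For (b), $\rank(B^*/k^*)=d\ge n-2$ means $B\cong_k k^{[\pm d]}\otimes_k k^{[e]}$ with $e=n-d\le2$, and the argument proving Corollary \ref{ret of lowvar}(c) goes through with ``$n\le3$'' replaced by ``$e\le2$'': one peels off the unit part using the method of Theorem \ref{main} and applies Costa's theorem \cite{Cos77} to the remaining retract of a polynomial ring in at most two variables, getting $A\cong_k k^{[\pm s]}\otimes_k k^{[t-s]}$ and hence $Q(A)=k^{(t)}$. For (c), let $n\le3$. If $M\ne1$, i.e. $d\ge1$, then Corollary \ref{ret of lowvar}(c) (or its evident analogue for $n\le2$) gives $A\cong_k k^{[\pm s]}\otimes_k k^{[t-s]}$, so $Q(A)=k^{(t)}$; if $M=1$ and $n\le2$, then $A\cong_k k^{[t]}$ by \cite{Cos77}; and if $M=1$ and $n=3$, then by (a) we may assume $t=2$, and a two-dimensional $k$-retract of $k^{[3]}$ is isomorphic to $k^{[2]}$ in every characteristic (the two-dimensional case of Costa's question; cf. \cite{Nag19,CDDG21}), so $Q(A)=k^{(2)}$. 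I expect the main obstacles to be making the smoothness step airtight in all characteristics --- in particular the implication ``geometrically reduced and $\Omega_{A/k}$ locally free of rank $\dim A$ implies smooth'' --- and, inside (c), the positive-characteristic case $B=k^{[3]}$, $t=2$, which relies on the two-dimensional instance of Costa's question holding over all fields.
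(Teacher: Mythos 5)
Your argument is correct in substance and, for parts (b) and (c), follows the same route as the paper: reduce to the structure theorem (Corollary \ref{ret of lowvar}, i.e.\ $A\cong_k k^{[\pm r]}\otimes_k k^{[s]}$ with $s\le 2$, which uses Theorem \ref{main}'s unit-peeling plus Costa's theorem for $R^{[2]}$), whence $Q(A)=k^{(\trdeg_k A)}$. For part (a) the paper simply invokes Proposition \ref{Gupta}(a) for $\trdeg_k A\in\{0,n\}$ (via algebraic closedness of retracts) and L\"uroth's theorem for $\trdeg_k A=1$; your version is a more self-contained elaboration of the same idea --- the smoothness-of-retracts argument via $\Omega_{A/k}$ being a direct summand of a free module, plus the $k$-point obtained by composing $A\hookrightarrow B$ with $x_i\mapsto 1$, is a clean way to justify rationality of the genus-$0$ model over a non-closed field, and effectively reproves the transcendence-degree-one L\"uroth statement the paper cites. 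The one point to fix is the subcase ``$M=1$, $n=3$'' in (c): there you assert that a two-dimensional $k$-retract of $k^{[3]}$ is $k^{[2]}$ ``in every characteristic,'' citing \cite{Nag19} and \cite{CDDG21}; those results are proved only in characteristic zero, and in positive characteristic this is precisely the open case of Costa's question highlighted in the introduction, so as written that step is not justified. Fortunately the case never arises: the paper's definition of a monomial requires $M\notin R$, so $M$ is a nonempty product of variables, $d=\rank(B^*/k^*)\ge 1$, and hence for $n\le 3$ one always has $d\ge n-2$ and Corollary \ref{ret of lowvar} applies directly. Deleting the $M=1$ digression makes your proof complete.
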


This paper is organized as follows. In Section 2, we recall basic notions and results used in this paper. 
In Section 3, we study retracts of a localized polynomial ring over a domain. Then we give proofs of Theorems \ref{main} and \ref{rational}. 
%%%%%%%%%%%%%%%%%%%%%%%%%%%%%%%%%%%%%%%%%
%%%%%%%%%%%%%%%%%%%%%%%%%%%%%%%%%%%%%%%%%
%Section 2
%%%%%%%%%%%%%%%%%%%%%%%%%%%%%%%%%%%%%%%%%
%%%%%%%%%%%%%%%%%%%%%%%%%%%%%%%%%%%%%%%%%
\section{Preliminaries}

Let $R$ be a domain and $A\subset B$ be $R$-algebras. Note that the coefficient ring of the (Laurent) polynomial ring is a retract. In particular, the following holds.

\begin{lem}\label{ac}
If $B\cong_RR^{[\pm m]}\otimes_RR^{[n]}$ for some $m,n\geq 0$, then $R$ is an $R$-retract of $B$. 
\end{lem}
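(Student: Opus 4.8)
The plan is to write down an explicit $R$-algebra retraction. Fix an $R$-algebra isomorphism $B\cong_R R[x_1^{\pm1},\ldots,x_m^{\pm1},y_1,\ldots,y_n]$ and identify $B$ with the right-hand side; under this identification $R$ sits inside $B$ as the coefficient ring, so the inclusion $R\subset B$ required by the definition of a retract is in place. I would then define a map $\varphi$ on generators by $\varphi(x_i)=1$ for $1\le i\le m$ and $\varphi(y_j)=0$ for $1\le j\le n$, and extend it $R$-linearly.

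The one point to verify is that $\varphi$ is a well-defined $R$-algebra homomorphism $B\to R$: by the universal property of the polynomial ring the images of the $y_j$ may be chosen arbitrarily in $R$, while by the universal property of localization the images of the invertible variables $x_i$ must be units of $R$, which forces a choice such as $1\in R^*$. This is the only place any care is needed, and it is the reason we send the $x_i$ to $1$ rather than to $0$. Granting this, $\varphi$ clearly restricts to the identity on the coefficient ring $R$, so $\varphi|_R={\rm id}_R$; hence $R$ is an $R$-retract of $B$, with $\varphi$ the required $R$-retraction. There is no real obstacle here: the statement simply records the evident fact that the coefficient ring of a (Laurent) polynomial ring is a retract, and the argument is a one-line application of the relevant universal properties.
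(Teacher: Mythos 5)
Your proof is correct and is exactly the evident argument the paper has in mind (the paper simply writes ``Omitted,'' noting that the coefficient ring of a (Laurent) polynomial ring is a retract). Sending each invertible variable to $1$ and each polynomial variable to $0$ is the standard retraction, and your remark about why the $x_i$ must go to units is the only point requiring any care.
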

\begin{proof}
Omitted. 
\end{proof}

An element $b\in B$ is said to be {\bf algebraic over $A$} if $f(b)=0$ for some $0\not=f\in A^{[1]}$. $A$ is {\bf algebraically closed in $B$} if every algebraic element of $B$ over $A$ belongs to $A$. 

We recall a few important properties of retracts recorded by Costa in \cite{Cos77}.

%We recall some equivalent conditions for retracts. 

\begin{prop} {\rm (cf.}\:\cite[Proposition 1.1]{Cos77}{\rm )} \label{def}
Let $R$ be a domain and $A\subset B$ be $R$-algebras. The following conditions are equivalent. 
\begin{enumerate}
\item[{\bf (a)}]
$A$ is an $R$-retract of $B$. 
\item[{\bf (b)}]
There is an $R$-algebra homomorphism $\pi:B\to B$ such that $\pi^2=\pi$ and $\pi(B)=A$. 
\item[{\bf (c)}]
There is an ideal $I$ of $B$ such that $B=A\oplus I$ as an $A$-module. 
\end{enumerate}
\end{prop}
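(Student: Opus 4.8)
The plan is to establish the cycle of implications (a) $\Rightarrow$ (b) $\Rightarrow$ (c) $\Rightarrow$ (a); each step should follow by simply unwinding the definitions, so the proof will be purely formal with no real calculation.

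For (a) $\Rightarrow$ (b), starting from an $R$-retraction $\varphi\colon B\to A$ with $\varphi|_A=\mathrm{id}_A$, I would compose with the inclusion $\iota\colon A\hookrightarrow B$ and set $\pi:=\iota\circ\varphi\colon B\to B$. Then $\pi$ is an $R$-algebra homomorphism as a composite of such, $\pi(B)=\iota(\varphi(B))=A$ since $\varphi$ is onto $A$ (it restricts to the identity there), and $\pi^2=\iota(\varphi\iota)\varphi=\iota\,\mathrm{id}_A\,\varphi=\iota\varphi=\pi$ because $\varphi\iota=\varphi|_A=\mathrm{id}_A$. For (b) $\Rightarrow$ (c), given an idempotent $R$-algebra endomorphism $\pi$ of $B$ with $\pi(B)=A$, I would put $I:=\ker\pi$, which is an ideal of $B$ and in particular an $A$-submodule. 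The decomposition $b=\pi(b)+(b-\pi(b))$ exhibits $B=A+I$, since $\pi(b)\in A$ and $\pi(b-\pi(b))=\pi(b)-\pi^2(b)=0$; and if $a\in A\cap I$, then writing $a=\pi(b)$ gives $\pi(a)=\pi^2(b)=\pi(b)=a$, while $a\in\ker\pi$ forces $\pi(a)=0$, so $a=0$. Hence $B=A\oplus I$ as $A$-modules.

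Finally, for (c) $\Rightarrow$ (a), I would let $\varphi\colon B\to A$ be the projection onto the first summand of $B=A\oplus I$. It is $A$-linear, hence $R$-linear, it sends $1$ to $1$ (as $1\in A$), and it restricts to $\mathrm{id}_A$; the only point requiring verification is multiplicativity, and for $b=a+x$, $b'=a'+x'$ with $a,a'\in A$ and $x,x'\in I$ one has $bb'=aa'+(ax'+a'x+xx')$, where the parenthesised term lies in $I$ precisely because $I$ is an ideal of $B$, so $\varphi(bb')=aa'=\varphi(b)\varphi(b')$. I do not expect any genuine obstacle; the one place where care is needed is this last computation, which uses that $I$ is an ideal of $B$ rather than merely an $A$-module complement of $A$ in $B$.
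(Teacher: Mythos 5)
Your proof is correct and follows the standard argument; the paper itself gives no proof of this proposition, simply citing Costa's Proposition 1.1, whose argument proceeds exactly along these lines (compose with the inclusion, take $I=\ker\pi$, and project onto the first summand). The one subtlety you correctly isolate --- that multiplicativity of the projection in (c) $\Rightarrow$ (a) requires $I$ to be an ideal of $B$ and not merely an $A$-module complement --- is indeed the only nontrivial point.
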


\begin{lem}\label{Costa}
{\rm (cf.}\:\cite[Lemma 1.3]{Cos77}{\rm )}
Let $B$ be a domain and  $A\subset B$ be an $R$-retract of $B$. Then $A$ is algebraically closed in $B$.
\end{lem}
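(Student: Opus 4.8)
The plan is to take an arbitrary element $b\in B$ that is algebraic over $A$ and show directly that $b=\varphi(b)$, where $\varphi:B\to A$ is the retraction supplied by the definition of an $R$-retract, so that $\varphi$ is an $R$-algebra homomorphism with $\varphi|_A={\rm id}_A$. Since $\varphi(b)$ automatically lies in $A$, establishing $b=\varphi(b)$ will give exactly $b\in A$, which is what algebraic closedness of $A$ in $B$ requires.

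First I would extract the algebraic relation economically. Among all nonzero polynomials $f\in A^{[1]}=A[T]$ with $f(b)=0$, I would fix one of minimal degree $n\geq 1$; such an $f$ exists precisely because $b$ is algebraic over $A$. Writing $\beta:=\varphi(b)\in A$, the crucial first observation is that $\beta$ is itself a root of $f$: applying the ring homomorphism $\varphi$ to $f(b)=0$ and using that $\varphi$ fixes the coefficients of $f$ (which lie in $A$) yields $0=\varphi(f(b))=f(\varphi(b))=f(\beta)$. This is the step in which the retraction does the real work, for it places a second root, $\beta$, inside $A$ alongside $b$.

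Next I would exploit $f(\beta)=0$ to factor over $A$ itself, not merely over $Q(A)$. Since $T-\beta$ is monic with $\beta\in A$, polynomial division in $A[T]$ gives $f(T)-f(\beta)=(T-\beta)h(T)$ for some $h\in A[T]$, and $f(\beta)=0$ reduces this to $f(T)=(T-\beta)h(T)$; as $f\neq 0$, necessarily $h\neq 0$ and $\deg h=n-1$. Evaluating at $b$ produces $0=f(b)=(b-\beta)h(b)$. Here the hypothesis that $B$ is a domain enters: the product vanishes, so either $b-\beta=0$, giving $b=\beta\in A$, or $h(b)=0$. But $h$ is a nonzero element of $A[T]$ of degree $n-1<n$ annihilating $b$, contradicting the minimality of $n$. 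Hence the second alternative is impossible and $b=\beta\in A$.

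The argument has just two moving parts that I expect to demand any care: first, the observation that the retraction forces $\beta=\varphi(b)$ to be a common root, which is what makes the linear factor $T-\beta$ available inside $A[T]$; and second, the combined use of the integral-domain hypothesis with the minimal-degree choice to eliminate the branch $h(b)=0$. Neither involves any lengthy computation, so I would expect the write-up to be brief, with the minimality-versus-domain dichotomy being the genuine heart of the proof.
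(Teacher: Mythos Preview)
Your proof is correct and is precisely the standard argument. The paper itself does not supply a proof of this lemma; it simply records the statement with a citation to Costa \cite[Lemma 1.3]{Cos77}, whose original proof is essentially the one you have written.
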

By combining Lemmas \ref{ac} and \ref{Costa}, the coefficient ring $R$ of $B=R^{[\pm m]}\otimes_RR^{[n]}$ is algebraically closed in $B$. 
The following theorem characterizes retracts of polynomial rings in two variables over a UFD (\cite[Theorem 3.5 and subsequent Remark]{Cos77}).
\begin{thm} \label{/UFD}
{\rm (cf.}\:\cite[Theorem 3.5]{Cos77}{\rm )}
Let $R$ be a UFD and $A$ be a retract of $R^{[2]}$. Then $A\cong_RR^{[s]}$ for some $s\in\{0,1,2\}$.  
\end{thm}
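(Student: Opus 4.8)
The plan is to split according to $d=\trdeg_R A$, which lies in $\{0,1,2\}$ since $A\subseteq B:=R[x,y]=R^{[2]}$. First I record the invariants that the retract structure forces. Writing $\varphi\colon B\to A$ for the retraction, $A=\varphi(B)=R[\varphi(x),\varphi(y)]$ is a finitely generated $R$-subdomain of $B$. Since $B^*=R^*$ and $R\subseteq A$, the algebraic closedness of $A$ in $B$ (Lemma \ref{Costa}) gives $A^*=B^*\cap A=R^*$, i.e.\ the units are trivial. The same lemma upgrades to normality: if $\alpha\in Q(A)$ is integral over $A$ then it is integral over the normal domain $B$, hence lies in $B$, and being algebraic over $A$ it lies in $A$; thus $A$ is normal. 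Finally, formal smoothness is inherited by retracts, so $A$ is formally smooth over $R$; and since $\operatorname{Pic}$ is functorial, the identity $\varphi\circ(\text{inclusion})=\mathrm{id}_A$ exhibits $\operatorname{Pic}(A)$ as a retract of $\operatorname{Pic}(B)=0$ (as $B$ is a UFD), so $\operatorname{Pic}(A)=0$.

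The two extreme cases are immediate. If $d=0$, every element of $A$ is algebraic over $R$; since (by Lemmas \ref{ac} and \ref{Costa}) $R$ is algebraically closed in $B$, we get $A=R=R^{[0]}$. If $d=2$, then $B$ is algebraic over $A$, and as $A$ is algebraically closed in $B$ this forces $A=B=R^{[2]}$.

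The heart is $d=1$. Here $P:=\ker\varphi$ is a prime of $B$ with $P\cap R=(0)$ and $\trdeg_R(B/P)=1$, so $P$ has height one; as $B$ is a UFD, $P=fB$ for a prime element $f$, and $A\cong R[x,y]/(f)$. Base-changing to $K:=Q(R)$, the ring $A_K:=A\otimes_R K$ is a $K$-retract of $K^{[2]}$; by the invariants above it is a geometrically integral, smooth affine $K$-curve with $A_K^*=K^*$ and $\operatorname{Pic}(A_K)=0$. Passing to $\overline{K}$ (legitimate since $A_K$ is geometrically integral) and writing $\Spec A_{\overline K}=\overline C\smallsetminus\{P_1,\dots,P_r\}$ with $\overline C$ the smooth projective model of genus $g$ and $r\ge1$, the vanishing of $\operatorname{Pic}(A_{\overline K})$ forces the Jacobian of $\overline C$ to be finitely generated, hence $g=0$; triviality of the units then forces $r=1$. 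Thus $A_{\overline K}\cong\overline K^{[1]}$, and since $A_K$ carries a $K$-rational point this descends to $A_K\cong K^{[1]}$.

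It remains to descend the generic isomorphism $A_K\cong K^{[1]}$ to an isomorphism $A\cong R^{[1]}$. Choosing a coordinate $w$ of $A_K$ lying in $A$ and clearing denominators, $A[1/D]=R[1/D][w]$ for some $0\ne D\in R$, so $A$ is already polynomial away from the finitely many height-one primes of $R$ dividing $D$; one then uses the normality of $A$, the triviality $A^*=R^*$, and the fact that each localization $R_{\mathfrak p}$ at such a prime is a DVR to show that no defect occurs there, yielding $A\cong R^{[1]}$. I expect this last descent, together with the curve classification producing $A_K\cong K^{[1]}$, to be the main obstacle: the retract hypothesis enters only through the rather soft invariants (trivial units, trivial class group, smoothness), and converting this softness into the rigid conclusion that the relative curve is the affine line over all of $R$—uniformly in the characteristic—is exactly where the UFD hypothesis and the normality of $A$ must be used in full.
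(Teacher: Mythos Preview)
The paper does not give its own proof of this theorem; it is quoted as a preliminary result from Costa \cite[Theorem 3.5 and subsequent Remark]{Cos77}, so there is no in-paper argument to compare against. I will therefore assess your proposal on its own merits and against what Costa's argument actually does.

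Your treatment of the cases $\trdeg_R A\in\{0,2\}$ is fine and matches the standard argument. Your handling of the field case $A_K\cong K^{[1]}$ is correct, though much heavier than necessary: you invoke smoothness, $\operatorname{Pic}$, and a Jacobian finiteness argument over $\overline K$, whereas L\"uroth's theorem plus the observation that $A_K$ is a normal affine $K$-curve with $A_K^*=K^*$ and a $K$-rational point already pins it down as $K^{[1]}$. (One small wrinkle: $\operatorname{Pic}(A_K)=0$ does not automatically imply $\operatorname{Pic}(A_{\overline K})=0$; you should instead apply the retract-of-$\operatorname{Pic}$ argument directly to $A_{\overline K}$ as a retract of $\overline K[x,y]$.)

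The genuine gap is the descent from $A_K\cong K^{[1]}$ to $A\cong R^{[1]}$, and you say as much yourself. The sketch you give---spread out over $\Spec R[1/D]$, then handle the bad height-one primes using normality, $A^*=R^*$, and that $R_{\mathfrak p}$ is a DVR---is not a proof, and in fact those ``soft'' invariants alone do not obviously force the special fibers to be $\mathbb{A}^1$. What actually closes this gap is either (i) a theorem on $\mathbb{A}^1$-fibrations over a normal base (Kambayashi--Miyanishi / Bass--Connell--Wright: a locally polynomial $\mathbb{A}^1$-fibration over a UFD is globally $R^{[1]}$), for which you must first show that \emph{every} fiber is $\mathbb{A}^1$---and this uses the retract structure again, since $A\otimes_R \kappa(\mathfrak p)$ is a retract of $\kappa(\mathfrak p)^{[2]}$---or (ii) Costa's own route, which is more elementary: he shows directly that a retract of $R^{[2]}$ of transcendence degree one over a UFD $R$ is generated by a single element, without passing through curve geometry at all. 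As written, your argument stops precisely where the UFD hypothesis must do real work, so the proof is incomplete.
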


%%%%%%%%%%%%%%%%%%%%%%%%%%%%%%%%%%%%%%%%%
%%%%%%%%%%%%%%%%%%%%%%%%%%%%%%%%%%%%%%%%%
%Section 3
%%%%%%%%%%%%%%%%%%%%%%%%%%%%%%%%%%%%%%%%%
%%%%%%%%%%%%%%%%%%%%%%%%%%%%%%%%%%%%%%%%%
\section{Proofs of Theorems \ref{main} and \ref{rational}}
Throughout this section, $R$ denotes a domain and $R[x_1,\ldots,x_n]\cong_RR^{[n]}$ the polynomial ring in $n\geq1$ variables over $R$. 
For an $R$-algebra $A$, define {\bf the multiplicative $\Z$-module $U_R(A)$} by $A^*/R^*$. A polynomial $M\in R[x_1,\ldots,x_n]$ is called a {\bf monomial} if $M\not\in R$ and $M$ is a product of variables. Note that, we may assume that, for some $1\leq d\leq n$, 
\[
R[x_1,\ldots,x_n]_M=R[x_1^{\pm1},\ldots,x_d^{\pm1},x_{d+1},\ldots,x_n].
\]
The following lemma gives fundamental properties of the multiplicative $\Z$-module $U_R(A)$. 
\begin{lem} \label{unit}
The following assertions hold true. 
\begin{enumerate}
\item[{\bf (a)}]
Let $\varphi:B\to A$ be an $R$-retraction. Then $\varphi$ induces the retraction $\tilde{\varphi}:U_R(B)\to U_R(A)$ as $\Z$-modules. Therefore, $U_R(A)$ is a direct summand of $U_R(B)$ as a $\Z$-module. 
\item[{\bf (b)}]
$U_R(R[x_1^{\pm1},\ldots,x_d^{\pm1},x_{d+1},\ldots,x_n])\cong\Z^{d}$. 
\end{enumerate}
\end{lem}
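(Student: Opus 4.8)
The plan is to prove Lemma \ref{unit} in two parts, handling each claim separately and with essentially elementary arguments about unit groups.

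\medskip

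\textbf{Proof of (a).} First I would observe that since $\varphi : B \to A$ is an $R$-algebra homomorphism, it carries units to units, and since $\varphi$ restricts to the identity on $A$, it carries $B^*$ onto a subgroup containing $A^*$; moreover $\varphi(R^*) = R^* \subseteq A^*$, so $\varphi$ descends to a well-defined group homomorphism $\tilde\varphi : U_R(B) = B^*/R^* \to A^*/R^* = U_R(A)$. Next, the inclusion $A \hookrightarrow B$ induces a homomorphism $\iota : U_R(A) \to U_R(B)$, and the composition $\tilde\varphi \circ \iota$ is induced by $\varphi|_A = \mathrm{id}_A$, hence is the identity on $U_R(A)$. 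Therefore $\iota$ is a split injection and $U_R(A)$ is (identified with) a direct summand of $U_R(B)$ as a $\Z$-module, namely $U_R(B) = \iota(U_R(A)) \oplus \ker\tilde\varphi$. The only minor point to check is that $\iota$ is injective, which follows because if $a \in A^*$ becomes a unit multiple of an element of $R^*$ inside $B$, then $a \in R^*$ already (as $A^* \cap R^* $ computed in $A$ equals $A^* \cap (R \cap A)^* = R^*$ since $R \subseteq A$); alternatively injectivity is automatic once we know $\tilde\varphi \circ \iota = \mathrm{id}$.

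\medskip

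\textbf{Proof of (b).} Write $S = R[x_1^{\pm1},\ldots,x_d^{\pm1},x_{d+1},\ldots,x_n]$. The claim is that $S^*/R^* \cong \Z^d$, generated by the images of $x_1,\ldots,x_d$. The key step is to determine $S^*$. Since $S = T[x_{d+1},\ldots,x_n]$ is a polynomial ring over the domain $T := R[x_1^{\pm1},\ldots,x_d^{\pm1}]$, and units of a polynomial ring over a domain are just the units of the coefficient ring, we get $S^* = T^*$. For $T$, one argues by induction on $d$: a unit of $R[x_1^{\pm1}]$ has the form $c\,x_1^{m}$ with $c \in R^*$ and $m \in \Z$ (compare lowest and highest degree terms of an element and its inverse in the domain $R[x_1^{\pm1}]$), so $R[x_1^{\pm1}]^* = R^* \times \langle x_1\rangle \cong R^* \times \Z$; iterating, $T^* = R^* \times \langle x_1 \rangle \times \cdots \times \langle x_d\rangle$ with each $\langle x_i \rangle \cong \Z$ and the $x_i$ multiplicatively independent modulo $R^*$ (a relation $x_1^{a_1}\cdots x_d^{a_d} \in R^*$ forces all $a_i = 0$ by comparing exponents). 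Dividing by $R^*$ gives $U_R(S) = S^*/R^* = T^*/R^* \cong \Z^d$.

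\medskip

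I do not expect a genuine obstacle here: both parts are standard facts, the first being a formal consequence of the retraction splitting and the second being the classical description of units in a Laurent polynomial ring over a domain. The only care needed is to state the splitting in (a) cleanly (making explicit that $U_R(A) \to U_R(B)$ is split by $\tilde\varphi$) and, in (b), to invoke correctly that units of a polynomial ring over a domain lie in the coefficient ring and that units of a one-variable Laurent polynomial ring over a domain are monomials times a constant unit; the induction on $d$ then finishes it.
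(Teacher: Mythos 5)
Your proof is correct and follows essentially the same route as the paper: part (a) is the formal splitting induced by $\varphi|_{B^*}$, and part (b) rests on the standard description of the unit group of $R[x_1^{\pm1},\ldots,x_d^{\pm1},x_{d+1},\ldots,x_n]$ as $R^*\cdot\{x_1^{e_1}\cdots x_d^{e_d}\}$, which the paper simply asserts and you justify in slightly more detail.
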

\begin{proof}
{\bf (a)} Since $\varphi$ is an $R$-algebra homomorphism, $\varphi|_{B^*}:B^*\to A^*$
induces the retraction $\tilde{\varphi}:U_R(B)\to U_R(A)$. Therefore, $U_R(B)=U_R(A) \oplus K$ for some $\Z$-module $K$. 

{\bf (b)} For $B=R[x_1^{\pm1},\ldots,x_d^{\pm1},x_{d+1},\ldots,x_n]$, $U_R(B)=\{x_1^{e_1}\cdots x_d^{e_d} \ | \ e_1,\ldots,e_d\in\Z \}$. Define the homomorphism $\alpha: U_R(B)\ni x_1^{e_1}\cdots x_d^{e_d}\to \sum_{i=1}^de_ix_i\in \bigoplus_{i=1}^d\Z x_i$. Then $\alpha$ is an isomorphism.  
\end{proof}

\subsection{Retracts of localized polynomial rings over a domain} \label{retract over a domain}
For $1\leq d\leq n$, let $B=R[{x_1}^{\pm 1}, \dots, {x_d}^{\pm 1}, x_{d+1}, \dots, x_n]$, $\varphi:B\to B$ be an $R$-algebra homomorphism such that $\varphi^2=\varphi$. Then $\rank(U_R(B))=d$, $A:=\varphi(B)$ is an $R$-retract of $B$ and $\varphi:B\to A$ is an $R$-retraction. Let $r=\rank(U_R(A))$. 

\begin{prop}\label{Gupta}
Assume the setup in Subsection \ref{retract over a domain}. The following assertions hold true.
\begin{enumerate}
\item[{\bf (a)}] 
If $\trdeg_R\:A=0$, then $A=R$. If $\trdeg_R\:A=n$, then $A=B$. 
\item[{\bf (b)}] There exist $y_1, \dots, y_d \in B$ such that 
	\begin{enumerate}
	\item[\rm(i)] $B= R[{y_1}^{\pm 1}, \dots, {y_d}^{\pm 1}, x_{d+1}, \dots, x_n]$ and
	\item[\rm(ii)] \begin{equation*}
\varphi(y_i)=
\left\{\begin{array}{lll}
y_i &{\text{~for~~}} 1\le i\le r\\
1 & {\text{~for~~}}  r+1\le i\le d
\end{array}
\right.
\end{equation*}
%$(\varphi(y_1),\ldots,\varphi(y_r),\varphi(y_{r+1}),\ldots,\varphi(y_d))=(y_1,\ldots,y_r,1,\ldots,1)$.
	\end{enumerate}
\item[{\bf (c)}] Let $S= R[{y_1}^{\pm 1}, \dots, {y_r}^{\pm 1}]$. Then $S \subseteq A$ and $A$ is an $S$-retract of\\ $B=S[{y_{r+1}}^{\pm 1}, \dots, {y_d}^{\pm 1}, x_{d+1}, \dots x_n]$ and $A=S[\varphi(x_{d+1}), \dots, \varphi{(x_n)}]$.
\item[{\bf (d)}] Let $J:=({y_{r+1}}-1, \dots, {y_d}-1)B$. Then $A$ is isomorphic to a subring $C$ of $B/J$, where $C$ is an $S$-retract of $B/J$. Therefore, identifying $A$ with $C$, and the fact that $B/J$ is isomorphic to $S^{[n-d]}$, we may regard $A$ as an $S$-retract of $S^{[n-d]}$. 
\end{enumerate}
\end{prop}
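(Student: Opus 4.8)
The plan is to prove parts (a)--(d) of Proposition~\ref{Gupta} in order, since each builds on the previous one, with the bulk of the work concentrated in part (b).

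\textbf{Part (a).} If $\trdeg_R A = 0$, then $A$ is algebraic over $R$. By Lemma~\ref{Costa}, $A$ is algebraically closed in $B$, and since $R \subseteq A$ (as $\varphi$ is an $R$-algebra homomorphism fixing $R$), every element of $A$ is algebraic over $R$ hence lies in $R$; thus $A = R$. If $\trdeg_R A = n$, then $Q(A) \subseteq Q(B)$ is an extension of transcendence degree $0$, so $B$ is algebraic over $A$; again by Lemma~\ref{Costa}, $A$ is algebraically closed in $B$, so $B \subseteq A$, whence $A = B$.

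\textbf{Part (b).} This is the heart of the argument. By Lemma~\ref{unit}(a), applying $\varphi$ gives a retraction $\tilde{\varphi}: U_R(B) \to U_R(A)$ of free $\Z$-modules, so $U_R(B) \cong U_R(A) \oplus \ker\tilde{\varphi}$ with $\rank U_R(A) = r$ and $\rank \ker\tilde{\varphi} = d - r$. Using the isomorphism $\alpha$ of Lemma~\ref{unit}(b), $U_R(B)$ is identified with $\bigoplus_{i=1}^d \Z x_i$, and the point is to change the multiplicative coordinates: I would choose a $\Z$-basis $\bar{y}_1, \dots, \bar{y}_d$ of $U_R(B)$ adapted to the direct sum decomposition, i.e. $\bar{y}_1, \dots, \bar{y}_r$ a basis of $\tilde{\varphi}$-fixed elements generating (a finite-index, hence after a further adjustment, all of) the image, and $\bar{y}_{r+1}, \dots, \bar{y}_d$ a basis of $\ker\tilde\varphi$. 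Since $\mathrm{GL}_d(\Z)$ acts on the Laurent monomials, lifting this basis change to honest Laurent monomials $y_1, \dots, y_d \in B^*$ gives $B = R[y_1^{\pm1}, \dots, y_d^{\pm1}, x_{d+1}, \dots, x_n]$, establishing (i). For (ii): $\varphi(y_i)$ is a unit of $A$; for $r+1 \le i \le d$ we have $\tilde\varphi(\bar y_i) = 0$, i.e. $\varphi(y_i) \in R^*$, and after rescaling $y_i$ by this constant we may take $\varphi(y_i) = 1$. For $1 \le i \le r$, $\tilde\varphi(\bar y_i) = \bar y_i$ means $\varphi(y_i) = c_i y_i$ with $c_i \in R^*$; replacing $y_i$ by a suitable constant multiple (or absorbing $c_i$) and using $\varphi^2 = \varphi$ to force $c_i = 1$, we get $\varphi(y_i) = y_i$. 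The delicate point I expect to be the main obstacle is ensuring that the lift of the $\mathrm{GL}_d(\Z)$ coordinate change can be made compatible with $\varphi$ simultaneously on the fixed part and the kernel part, and that the unit constants $c_i$ can genuinely be normalized away using idempotency rather than just ``up to units''; this requires care about the order of operations (first fix the kernel generators, then the fixed generators).

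\textbf{Parts (c) and (d).} With (b) in hand, (c) is essentially formal: set $S = R[y_1^{\pm1}, \dots, y_r^{\pm1}]$. Since $\varphi(y_i) = y_i$ for $i \le r$, we have $S \subseteq A$, and $\varphi$ is then an $S$-algebra homomorphism, so $A = \varphi(B) = S[\varphi(y_{r+1}^{\pm1}), \dots, \varphi(y_d^{\pm1}), \varphi(x_{d+1}), \dots, \varphi(x_n)] = S[\varphi(x_{d+1}), \dots, \varphi(x_n)]$ because $\varphi(y_i) = 1$ for $i > r$; thus $A$ is an $S$-retract of $B$ viewed as $S[y_{r+1}^{\pm1}, \dots, y_d^{\pm1}, x_{d+1}, \dots, x_n]$. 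For (d), let $J = (y_{r+1} - 1, \dots, y_d - 1)B$. The composite $B \xrightarrow{\varphi} A \hookrightarrow B$ followed by reduction $B \to B/J$ kills $J$ (since $\varphi(y_i - 1) = 0$ for $i > r$), so it factors through $B/J$, giving an $S$-algebra map $B/J \to A$; conversely $A \hookrightarrow B \to B/J$ is injective on $A$ because $A \cap J = 0$ (any element of $A$ in $J$ is sent to $0$ by $\varphi$ but fixed by $\varphi$). Hence $A \cong C := $ image of $A$ in $B/J$, and $C$ is an $S$-retract of $B/J$ via the induced idempotent. Finally $B/J \cong S[x_{d+1}, \dots, x_n] = S^{[n-d]}$ (setting each $y_i = 1$, $i > r$, in the Laurent part leaves $S$ times a genuine polynomial ring), so $A$ is realized as an $S$-retract of $S^{[n-d]}$, as claimed.
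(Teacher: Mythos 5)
Your proposal follows essentially the same route as the paper's proof: decompose $U_R(B)\cong\Z^d$ as $U_R(A)\oplus\ker\tilde{\varphi}$, lift an adapted $\Z$-basis to Laurent monomials $y_1,\dots,y_d$, normalize the constants (your use of $\varphi^2=\varphi$ to force $c_i=1$ on the fixed part is a harmless variant of the paper's observation that those $y_i$ already lie in $A^*$), and pass to $B/J$ exactly as the paper does. The one slip is in part (a): to conclude $A=R$ from $\trdeg_R\:A=0$ you need $R$, not $A$, to be algebraically closed in $B$; this follows from Lemma~\ref{ac} together with Lemma~\ref{Costa} applied to the retract $R\subseteq B$, whereas the algebraic closedness of $A$ in $B$ is what you correctly use for the second claim $A=B$. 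With that citation fixed, the argument is complete.
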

\begin{proof}
{\bf (a)} The assertions follow from Lemmas \ref{ac} and \ref{Costa}. 
%Suppose that $\trdeg_R\:A=0$. Then $A$ is algebraic over $R$, hence Lemma \ref{ac} implies that $A=R$. If $\trdeg_R\:A=n$, then $B$ is algebraic over $A$. It follows from Lemma \ref{Costa} (a) that $A=B$. 

{\bf (b)} By Lemma \ref{unit} (a), $U_R(A)$ is a direct summand of $U_R(B)$ and by Lemma \ref{unit} (b) $U_R(B)\cong{\Z}^d$. Therefore, $U_R(B){\Z}^d=U_R(A) \oplus K$ 
for some $\Z$-submodule $K$ of  $U_R(B)$. Here we identify $U_R(B)$ with $\Z^d$.  Since submodules of a free module over a PID are free, it follows that $U_R(A)$ and $K$ are free $\Z$-modules. Then $0\leq r\leq d$ and $\Z^d=U_R(B)=U_R(A) \oplus K= \Z^r\oplus \Z^{d-r}$. For $1\le i\le d$, let $b_{i}= (a_{i1}, \dots, a_{id})\in \Z^d$ be such that $\{b_i \ | \ 1\le i\le r\}$ is a $\Z$-basis of $U_R(A)$ and $\{b_i \ | \ r+1\le i\le d\}$ is a $\Z$-basis of $K$. Set $y_i:=x_1^{a_{i1}}\cdots x_d^{a_{id}}$ for $1\le i\le d$. Then the assertion (i) 
 follows with these $y_i$'s. 
Now for $1\le i\le r$, $\varphi(y_i)=y_i$ and hence $y_i \in A$. Suppose $r+1\le i\le d$.  Then $\varphi(y_i)=\lambda_i$ for some $\lambda_i \in R^*$. Replacing $y_i$ by $\lambda_i^{-1}y_i$, it follows that $\varphi(y_i)=1$.

{\bf (c)} The assertions follow from (b).  

{\bf (d)} Let $\pi: B \to B/J$ be the quotient map.  We note that $\varphi(J)=0$. Also by (b), $\varphi|_S$ is the identity ring homomorphism. Therefore, $\varphi$ induces a unique $S$-algebra homomorphism $\bar{\varphi}: B/J \to B/J$ such that $\bar{\varphi}\circ\pi=\pi\circ\varphi$. Since $\varphi^2=\varphi$, we have 
\[
(\bar{\varphi})^2\circ\pi=\bar{\varphi}\circ(\bar{\varphi}\circ\pi)=\bar{\varphi}\circ(\pi\circ{\varphi})=\pi\circ\varphi^2=\pi\circ\varphi.
\] 
Therefore, by uniqueness of $\bar{\varphi}$, we have $(\bar{\varphi})^2=\bar{\varphi}$. Thus, $C:= \bar{\varphi}(B/J)$ is an $S$-retract of $B/J$. Since $\varphi(B)=A$, we see that $\pi(A)=C$. We now show that $C$ is isomorphic to $A$. For this, it is enough to show that $\pi|_A$ is injective. 
Since $\varphi:B\to A$ is an $R$-retraction, $B=A\oplus\ker\varphi$ and hence $A\cap\ker\varphi=\{0\}$. Then $\ker(\pi|_A)=A\cap J\subset A\cap\ker\varphi=\{0\}$. 
Thus, $\pi|_A$ is injective.
\end{proof}

\begin{cor} \label{inequality}
Assume the setup in Subsection \ref{retract over a domain}. The following assertions hold true.
\begin{enumerate}
\item[{\bf (a)}]
$0\leq \trdeg_R\:A-r\leq n-d$. 
\item[{\bf (b)}]
If $\trdeg_R\:A=r$, then $A\cong_RR^{[\pm r]}$. 
\item[{\bf (c)}]
If $\trdeg_R\:A=r+n-d$, then $A\cong_RR^{[\pm r]}\otimes_RR^{[n-d]}$. 
\end{enumerate} 
\end{cor}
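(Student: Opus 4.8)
The whole statement will follow by feeding Proposition \ref{Gupta} into elementary facts about transcendence degree together with Lemmas \ref{ac} and \ref{Costa}. Recall from Proposition \ref{Gupta}(b)--(d) that, after the coordinate change made there, $A$ contains the subring $S=R[y_1^{\pm1},\dots,y_r^{\pm1}]\cong_R R^{[\pm r]}$, that $A=S[\varphi(x_{d+1}),\dots,\varphi(x_n)]$, and that $A$ is $R$-isomorphic (in fact $S$-isomorphic) to an $S$-retract $C$ of $S^{[n-d]}$. Put $e:=\trdeg_S A$. Additivity of transcendence degree along $Q(R)\subseteq Q(S)\subseteq Q(A)$ gives $\trdeg_R A=\trdeg_R S+e=r+e$, so (a), (b), (c) are exactly the assertions that $0\le e\le n-d$, that $e=0\Rightarrow A\cong_R R^{[\pm r]}$, and that $e=n-d\Rightarrow A\cong_R R^{[\pm r]}\otimes_R R^{[n-d]}$, respectively.

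For (a): the inclusion $S\subseteq A$ gives $e\ge 0$, and $A=S[\varphi(x_{d+1}),\dots,\varphi(x_n)]$ exhibits $A$ as generated over $S$ by the $n-d$ elements $\varphi(x_{d+1}),\dots,\varphi(x_n)$, so $e\le n-d$. For (b) and (c), I would transfer the question to $C\subseteq S^{[n-d]}$, which has $\trdeg_S C=\trdeg_S A=e$ since $A\cong_S C$. If $e=0$, every element of $C$ is algebraic over $S$; as $S$ is algebraically closed in $S^{[n-d]}$ by Lemmas \ref{ac} and \ref{Costa} (the coefficient ring of a polynomial ring is a retract of it) and $C$ is algebraically closed in $S^{[n-d]}$ by Lemma \ref{Costa}, it follows that $C$ sits between $S$ and the algebraic closure of $S$ in $S^{[n-d]}$, which is $S$ itself; hence $C=S$ and $A\cong_R S\cong_R R^{[\pm r]}$. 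If $e=n-d=\trdeg_S S^{[n-d]}$, then $S^{[n-d]}$ is algebraic over $C$, and since $C$ is algebraically closed in $S^{[n-d]}$ (Lemma \ref{Costa}) we get $C=S^{[n-d]}$, so $A\cong_R S^{[n-d]}=R^{[\pm r]}\otimes_R R^{[n-d]}$.

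I do not anticipate a real obstacle: this corollary is bookkeeping on top of Proposition \ref{Gupta}. The only point requiring a little care is that Proposition \ref{Gupta}(a) is stated for the specific setup of Subsection \ref{retract over a domain}, so rather than cite it verbatim in the proof of (b) and (c) I would re-run its short argument with the domain $S$ and the polynomial ring $S^{[n-d]}$ in place of $R$ and $B$ — this is legitimate because $S=R^{[\pm r]}$ is again a domain, and nothing else in that argument is used.
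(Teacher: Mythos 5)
Your proof is correct, and parts (a) and (b) follow the paper's argument exactly: (a) from $S\subseteq A=S[\varphi(x_{d+1}),\dots,\varphi(x_n)]$, and (b) by passing to the $S$-retract $C$ of $S^{[n-d]}$ from Proposition \ref{Gupta}(d) and using that $S$ is algebraically closed there. For (c) you take a slightly different (equally valid) route: you observe that $C$ has full transcendence degree $n-d$ over $S$ and is algebraically closed in $S^{[n-d]}$ by Lemma \ref{Costa}, hence equals $S^{[n-d]}$ --- i.e.\ you re-run the argument of Proposition \ref{Gupta}(a) with $S$ as the base, as you say. The paper instead argues directly on $A$: since $\trdeg_R A=r+n-d$ and $A$ is generated over $R$ by $y_1^{\pm1},\dots,y_r^{\pm1},\varphi(x_{d+1}),\dots,\varphi(x_n)$, these $r+(n-d)$ generators must be algebraically independent, so $A$ is literally the polynomial ring over $S$ in the $\varphi(x_j)$. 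Both are one-line finishes; yours leans on the retract-theoretic machinery already in place, while the paper's avoids invoking Lemma \ref{Costa} a second time by a generator count. Your closing caveat about not citing Proposition \ref{Gupta}(a) verbatim over the new base $S$ is well taken and correctly resolved.
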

\begin{proof}
{\bf (a)} By Proposition \ref{Gupta} (c), $S=R[{y_1}^{\pm 1}, \dots, {y_r}^{\pm 1}]\subset A$ and $S\cong_RR^{[\pm r]}$, hence $\trdeg_R\:A\geq r$. Moreover, $A=S[\varphi(x_{d+1}), \dots, \varphi{(x_n)}]$ implies that $\trdeg_R\:A\leq r+n-d$. 

{\bf (b)} If $\trdeg_R\:A=r$, then $A$ is algebraic over $S$. By Proposition \ref{Gupta} (c), we may regard $A$ as an $S$-subalgebra retract of $B/J\cong_S S[x_{d+1}, \dots x_n]$. Since $S$ is algebraically closed in $S[x_{d+1}, \dots x_n]$, we have $A=S\cong_RR^{[\pm r]}$. 

{\bf (c)} Suppose that $\trdeg_R\:A=r+n-d$. Then $y_1,\ldots,y_r,\varphi(x_{d+1}),\ldots,\varphi(x_n)$ are algebraically independent over $R$. For any $d+1\leq j\leq n$, $\varphi(x_{j})\notin A^*$. Therefore $A\cong_RR^{[\pm r]}\otimes_RR^{[n-d]}$.  
\end{proof}

Now Theorem \ref{main} follows imeediately from Corollary \ref{inequality}.
\begin{proof}[Proof of Theorem \ref{main}]
Let $A$ be an $R$-retract of $B:=R[x_1^{\pm 1},\ldots,x_n^{\pm 1}]$ and $r=\rank (U_R(A))$. Since $\rank (U_R(B))=n$, Corollary \ref{inequality} (a) implies that $\trdeg_R\:A-r\leq n-n=0$ and hence $\trdeg_R\:A=r$. By Corollary \ref{inequality} (b), $A\cong_RR^{[\pm r]}$. 
\end{proof}

The following corollary shows that the Zariski Cancellation Problem for a Laurent polynomial ring is affirmative. Note that this was proved by the first author and Bhatwadekar in \cite[Lemma 4.5]{BG12}. 

\begin{cor} \label{st}
Let $A$ be an $R$-domain of the transcendence degree $d$ over $R$. If $A^{[\pm m]}\cong_R R^{[\pm (d+m)]}$ for some $m\geq1$, then $A\cong_R R^{[\pm d]}$. 
\end{cor}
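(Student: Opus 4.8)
\textbf{Proof proposal for Corollary \ref{st}.}

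The plan is to reduce the hypothesis $A^{[\pm m]}\cong_R R^{[\pm(d+m)]}$ to an assertion about retracts and then quote Theorem \ref{main}. First I would observe that $A$ is naturally an $R$-subalgebra of $A^{[\pm m]}$, and under the given isomorphism $A^{[\pm m]}\cong_R R^{[\pm(d+m)]}$ we may regard $A$ as an $R$-subalgebra of $B:=R^{[\pm(d+m)]}$. The point is that $A$ is an $R$-retract of $A^{[\pm m]}$: the coefficient ring of a Laurent polynomial ring is always a retract, which is exactly the content of Lemma \ref{ac} (applied with the roles of $R$ and $A$ interchanged, taking $m$ Laurent variables and $0$ ordinary variables). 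Hence $A$ is an $R$-retract of $B=R^{[\pm(d+m)]}$.

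Next I would apply Theorem \ref{main} directly: every $R$-retract of $R^{[\pm(d+m)]}$ is a Laurent polynomial ring over $R$, so $A\cong_R R^{[\pm s]}$ for some $s\geq 0$. It remains to pin down $s$. Since $A$ is an $R$-domain with $\trdeg_R\:A=d$ by hypothesis, and $R^{[\pm s]}$ has transcendence degree $s$ over $R$, comparing transcendence degrees gives $s=d$. Therefore $A\cong_R R^{[\pm d]}$, which is the desired conclusion.

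There is really no serious obstacle here; the corollary is a formal consequence of Theorem \ref{main} together with the elementary fact that the coefficient ring of a Laurent polynomial extension is a retract. The only mild care needed is to make sure the retraction in Lemma \ref{ac} is an $R$-algebra homomorphism (it is, since it fixes $R$ and sends each Laurent variable to $1$), and that the identification of $A$ as a subring of $B$ via the isomorphism is compatible with the $R$-algebra structure, which is immediate because the isomorphism is an $R$-algebra isomorphism. Thus the argument is: $A$ is an $R$-retract of $A^{[\pm m]}\cong_R R^{[\pm(d+m)]}$, Theorem \ref{main} forces $A\cong_R R^{[\pm s]}$, and the transcendence degree count gives $s=d$.
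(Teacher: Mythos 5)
Your argument is correct and is essentially the paper's own proof: the paper likewise observes that $A$ is an $R$-retract of $R^{[\pm(d+m)]}$ and invokes Theorem \ref{main}. You merely spell out the two details the paper leaves implicit (that the coefficient ring is a retract via Lemma \ref{ac}, and that the transcendence degree count pins down $s=d$).
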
 
\begin{proof}
Since $A$ is an $R$-retract of $R^{[\pm (d+m)]}$, the assertion follows from Theorem \ref{main}. 
\end{proof}

The following corollary gives classifications of retracts of $R[x_1^{\pm1},\ldots,x_d^{\pm1},x_{d+1},\ldots,x_n]$ for some $1\leq d\leq n$ in special cases. These are analogues of Theorem \ref{/UFD}, \cite[Theorem 3.4]{Cos77}, \cite[Theorem 5.8]{CDDG21} and \cite[Theorem 2.5]{Nag19}. 

\begin{cor}\label{ret of lowvar}
Assume the setup in Subsection \ref{retract over a domain}. The following assertions hold true. 
\begin{enumerate}
\item[{\bf (a)}]
If $d\geq n-1$, then $A\cong_RR^{[\pm r]}\otimes_RR^{[s]}$ for $s\in\{0,1\}$.  
\item[{\bf (b)}]
Assume that $d=n-2$. If $R$ is a UFD, then $A\cong_RR^{[\pm r]}\otimes_RR^{[s]}$ for $s\in\{0,1,2\}$.  
\item[{\bf (c)}]
If either $n\leq 2$, or $n=3$ and $R$ is a UFD, then $A\cong_RR^{[\pm r]}\otimes_RR^{[s]}$ for $s\in\{0,1, 2\}$.  
\end{enumerate}
\end{cor}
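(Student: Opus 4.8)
The plan is to derive all three parts of Corollary \ref{ret of lowvar} from Proposition \ref{Gupta} together with Corollary \ref{inequality}, reducing each case to a known structural result about retracts of genuine polynomial rings. Recall that by Proposition \ref{Gupta} (d) we may regard $A$ as an $S$-retract of $S^{[n-d]}$, where $S = R[y_1^{\pm 1},\dots,y_r^{\pm 1}] \cong_R R^{[\pm r]}$, and that by Corollary \ref{inequality} (a) we have $0 \le \trdeg_R A - r \le n-d$, so that the ``relative transcendence degree'' $e := \trdeg_R A - r = \trdeg_S A$ satisfies $0 \le e \le n-d$. The strategy in each case is: if $e$ attains one of the extreme values $0$ or $n-d$, invoke Corollary \ref{inequality} (b) or (c); otherwise $n-d$ is small enough that the intermediate value can also be handled directly.

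\textbf{(a)} Here $n-d \le 1$, so $e \in \{0,1\}$ and these are exactly the two extreme values. If $e = 0$, Corollary \ref{inequality} (b) gives $A \cong_R R^{[\pm r]}$; if $e = 1$ (which forces $n-d = 1$), Corollary \ref{inequality} (c) gives $A \cong_R R^{[\pm r]} \otimes_R R^{[1]}$. In both cases $A \cong_R R^{[\pm r]} \otimes_R R^{[s]}$ with $s = e \in \{0,1\}$.

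\textbf{(b)} Now $n-d = 2$, so $e \in \{0,1,2\}$. The cases $e=0$ and $e=2$ are again the extremes, handled by Corollary \ref{inequality} (b) and (c) respectively, giving $s=0$ or $s=2$. The new case is $e=1$: here $A$ is an $S$-retract of $S^{[2]}$ and $\trdeg_S A = 1$. The key point is that $S = R[y_1^{\pm 1},\dots,y_r^{\pm 1}]$ is a UFD whenever $R$ is a UFD (a localization of the UFD $R^{[r]}$). So Theorem \ref{/UFD}, applied with the UFD $S$ in place of $R$, tells us that an $S$-retract of $S^{[2]}$ is $\cong_S S^{[s]}$ for some $s \in \{0,1,2\}$; since $\trdeg_S A = 1$, necessarily $s = 1$, so $A \cong_S S^{[1]} \cong_R R^{[\pm r]} \otimes_R R^{[1]}$.

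\textbf{(c)} This follows by combining (a) and (b). If $n \le 2$ then automatically $d \ge n-1$ (indeed $d \le n$ always, and if $d \le n-2$ then $n-d \ge 2$ forces $n \ge 2$ hence $n = 2$, $d = 0$, a case one can fold into (b) with the trivial UFD hypothesis — but more cleanly: for $n \le 2$ one has $n - d \le 2$, and $R$ being arbitrary, $n-d \le 1$ unless $d=0, n=2$; Costa's Theorem \ref{/UFD} is not needed when $n-d\le 1$, and when $n-d = 2$ with $n \le 2$ we have $d = 0$, $S = R$, and $A$ is an $R$-retract of $R^{[2]}$ — but here $R$ need not be a UFD, so one should instead observe that for $n \le 2$ with $d \ge 1$ we get $n - d \le 1$ and apply (a), while the remaining subcase $d = 0$, $n = 2$ is Costa's original setting over an arbitrary domain, where $R^{[2]}$ retracts... ). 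To keep this clean I would simply state: when $n=3$ and $R$ is a UFD, we have $1 \le d \le 3$; if $d \in \{2,3\}$ then $d \ge n-1$ and (a) applies (with $s \le 1$); if $d = 1$ then $d = n-2$ and (b) applies (with $s \le 2$). When $n \le 2$ with $d \ge 1$, $d \ge n-1$ and (a) applies. In all cases $A \cong_R R^{[\pm r]} \otimes_R R^{[s]}$ with $s \in \{0,1,2\}$.

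The main obstacle is purely bookkeeping: making sure the hypothesis of Theorem \ref{/UFD} is legitimately met, i.e.\ that $S = R[y_1^{\pm 1},\dots,y_r^{\pm 1}]$ is a UFD when $R$ is, and correctly tracking which value of $e$ (hence of $s$) occurs. No genuinely new idea is required beyond the reductions already established in Proposition \ref{Gupta} and Corollary \ref{inequality}; the content is in recognizing that the Laurent localization in the first $r$ variables can be absorbed into the base ring, after which the problem becomes one about retracts of ordinary polynomial rings in $n-d$ variables over that enlarged base.
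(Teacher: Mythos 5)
Your proof is correct and follows essentially the same route as the paper: part (a) from Corollary \ref{inequality}, part (b) by viewing $A$ as an $S$-retract of $S^{[2]}$ via Proposition \ref{Gupta}~(d) and applying Theorem \ref{/UFD} to the UFD $S\cong_R R^{[\pm r]}$, and part (c) by combining the two. The only difference is cosmetic: the paper applies Theorem \ref{/UFD} uniformly in (b) rather than splitting off the extreme values of $e$, and your digression in (c) about $d=0$ is moot since the setup of Subsection \ref{retract over a domain} fixes $1\le d\le n$, exactly as your final clean version assumes.
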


\begin{proof}
{\bf (a)} 
Suppose that $d\geq n-1$. By Corollary \ref{inequality} (a), $0\leq \trdeg_R\:A-r\leq n-d\leq 1$ and hence $\trdeg_R\:A\in\{r,r+n-d\}$. Therefore, the assertion follows from Corollary \ref{inequality} (b) and (c).  

{\bf (b)} Since $n-d=2$, Proposition \ref{Gupta} (d) implies that $A$ is an $S$-retract of $S^{[2]}$, where $S\cong_RR^{[\pm r]}$. Since $S$ is a UFD, it follows from Theorem \ref{/UFD} that $A\cong_SS^{[s]}\cong_RR^{[\pm r]}\otimes_RR^{[s]}$ for $s\in\{0,1,2\}$.  

{\bf (c)} If $n\leq 2$, then $d\geq 1\geq n-1$. Therefore the assertion follows from (a). 
Suppose that $n=3$ and $R$ is a UFD. Since $d\geq 1=n-2$, the assertion follows from (a) and (b). 
\end{proof}

As a consequence of Corollary \ref{ret of lowvar}, Theorem \ref{rational} holds as below. 
\begin{proof}[Proof of Theorem \ref{rational}]
Suppose that (a) holds. If $\trdeg_k\:A\in\{0,n\}$, then it follows from Proposition \ref{Gupta} (a) that $Q(A)$ is rational. If $\trdeg_k\:A=1$, then it follows from L\"{u}roth's theorem that $Q(A)$ is rational. If (b) or (c) holds, then Corollary \ref{ret of lowvar} implies that $Q(A)$ is rational. 
\end{proof}

\medskip
\noindent {\bf Acknowledgments.} 
The work of the second author was supported by JSPS KAKENHI Grant Number JP21K13782.

%%%%%%%%%%%%%%%%%%%%%%%%%%%%%%%%%%%%%%%%%
%%%%%%%%%%%%%%%%%%%%%%%%%%%%%%%%%%%%%%%%%
%References
%%%%%%%%%%%%%%%%%%%%%%%%%%%%%%%%%%%%%%%%%
%%%%%%%%%%%%%%%%%%%%%%%%%%%%%%%%%%%%%%%%%


\begin{thebibliography}{30}
\bibitem{AHE72}
S. Abhyankar, W. Heinzer and P. Eakin, 
On the uniqueness of the ring of coefficients in a polynomial ring, 
J. Algebra {\bf 23} (1972), 310--342.

%\bibitem{BD99}
%S.M. Bhatwadekar and A.K. Dutta, 
%Structure of $\A^*$-fibrations over one-dimensional seminormal semilocal domains, 
%J. Algebra {\bf 220} (1999), no. 2, 561--573.

%\bibitem{BD00}
%S.M. Bhatwadekar and A.K. Dutta, 
%On $\A^*$-fibrations, 
%J. Pure Appl. Algebra {\bf 149} (2000), no. 1, 1--14.

\bibitem{BG12}
S.M. Bhatwadekar and N. Gupta, 
The structure of a Laurent polynomial fibration in $n$ variables, 
J. Algebra {\bf 353} (2012), 142--157. 

\bibitem{BG15} 
S.M. Bhatwadekar and N. Gupta, 
A note on the cancellation property of $k[X,Y]$, 
J. Algebra Appl. {\bf 14} (2015), no. 9, 1540007, 5 pp.

\bibitem{CDDG21}
S. Chakraborty, N. Dasgupta, A.K. Dutta and N. Gupta, 
Some results on retracts of polynomial rings, 
J. Algebra {\bf 567} (2021), 243--268. 

\bibitem{Cos77}
D. Costa, 
Retracts of polynomial rings, 
J. Algebra {\bf 44} (1977) 492--502. 

%\bibitem{DFN22}
%D. Daigle, G. Freudenburg and T. Nagamine, 
%Generalizations of Samuel’s criteria for a ring to be a unique factorization domain, 
%J. Algebra {\bf 594} (2022), 271--306.

\bibitem{Dub16}
A. Dubouloz, 
On the cancellation problem for algebraic tori, 
Ann. Inst. Fourier (Grenoble) {\bf 66} (2016), no. 6, 2621--2640. 

\bibitem{Fre14}
G. Freudenburg, 
Laurent cancellation for rings of transcendence degree one over a field, 
Automorphisms in birational and affine geometry, 
Springer Proc. Math. Stat., vol. 79, Springer, Cham, 2014, p. 313--326.

%\bibitem{Fre17}
%G. Freudenburg, 
%\emph{Algebraic Theory of Locally Nilpotent Derivations, second ed.}, 
%Encyclopaedia of Mathematical Sciences, vol. 136, Springer-Verlag, Berlin, Heidelberg, New York (2017)

\bibitem{Fuj79}
T. Fujita, 
On Zariski problem, 
Proc. Japan Acad. Ser. A 55 (1979) 106--110. 

\bibitem{Gup14A}
N. Gupta, 
On the cancellation problem for the affine space $\A^3$ in characteristic $p$, 
Invent. Math. {\bf 195} (2014) 279--288. 

\bibitem{Gup14B}
N. Gupta, 
On Zariski's cancellation problem in positive characteristic, 
Adv. Math. {\bf 264} (2014) 296--307. 

\bibitem{Koj16}
H. Kojima, 
Notes on the kernels of locally finite higher derivations in polynomial rings, 
Comm. Algebra {\bf 44} (2016), no. 5, 1924--1930.

%\bibitem{Kur17}
%S. Kuroda, 
%A generalization of Nakai's theorem on locally finite iterative higher derivations, 
%Osaka J. Math. {\bf 54} (2017), no. 2, 335--341. 

%\bibitem{LS22} 
%D. Liu and X. Sun, 
%Retracts that are kernels of locally nilpotent derivations, 
%Czechoslovak Math. J. 72(147) (2022), no. 1, 191--199.

\bibitem{MS80}
M.~Miyanishi and T.~Sugie, 
Affine surfaces containing cylinderlike open sets, 
J. Math. Kyoto Univ., {\bf 20} (1980), 11--42. 

\bibitem{Nag19}
T. Nagamine, 
A note on retracts of polynomial rings in three variables, 
J. Algebra {\bf 534} (2019), 339--343.

%\bibitem{NZ06}
%A. Nowicki and J. Zieli\'{n}ski, 
%Rational constants of monomial derivations, 
%J. Algebra {\bf 302} (2006) 387--418.

%\bibitem{Rus70}
%P. Russell, 
%Forms of the affine line and its additive group, 
%Pac. J. Math. {\bf 32} (1970) 527--539.

\bibitem{Rus81}
P.~Russell, 
On affine ruled rational surfaces, 
Math. Ann., {\bf 255} (1981), 287--302.

\bibitem{Sal84}
D.~Saltman, 
Retract rational fields and cyclic Galois extensions, 
Israel J. Math. {\bf 47} (1984), no. 2-3, 165--215.

\end{thebibliography}
\end{document}